\newtheorem{thm}{Theorem}[section]
\newtheorem*{thm*}{Theorem}
\newtheorem{lem}[thm]{Lemma}
\newtheorem{cor}[thm]{Corollary}
\theoremstyle{definition}
\DeclareMathOperator{\ord}{ord}
\DeclareMathOperator{\cha}{char}
\def\notdivides{\mathrel{\kern-3pt\not\!\kern3pt\bigm|}}
\def\divides{\mathrel{\negmedspace|\negmedspace}}
\def\ge{\geqslant}
\begin{document}

\begin{frontmatter}

\title{Polynomial Zsigmondy theorems}

\author{Anthony Flatters}
\author{Thomas Ward}
\ead{t.ward@uea.ac.uk}
\address{School of Mathematics, University of East Anglia,
Norwich NR4 7TJ, UK}

\begin{abstract}
We find analogues of the
primitive divisor results of Zsigmondy, Bang,
Bilu--Hanrot--Voutier, and Carmichael
in polynomial rings, following
the methods of Carmichael.
\end{abstract}

\begin{keyword}
Zsigmondy theorem \sep Polynomial ring \sep Primitive divisor
\MSC[2010] 11A41 \sep 11B39
\end{keyword}

\end{frontmatter}

A prime divisor of a term~$a_n$ of a sequence~$(a_n)_{n\ge1}$
is called primitive if it divides no earlier term. The
classical Zsigmondy theorem~\cite{zsigmondy}, generalizing
earlier work of Bang~\cite{bang} (in the case~$b=1$), shows
that every term beyond the sixth in the
sequence~$(a^n-b^n)_{n\ge1}$ has a primitive divisor
(where~$a>b>0$ are coprime integers). Results of this form are
important in group theory and in the theory of recurrence
sequences (see the monograph~\cite[Sect.~6.3]{MR1990179} for a
discussion and references).

Our purpose here is to consider similar questions in polynomial
rings. The method of Carmichael~\cite{carmichael} is used to
find analogous results, with some modifications needed to avoid
terms in the sequence where the Frobenius automorphism
precludes primitive divisors. In even characteristic the
results take a slightly different form, and an analogue of
Bang's theorem is found here.

\section{Polynomial analogues}

Let~$k$ be a field (of odd characteristic, unless stated
otherwise), and consider a sequence~$(f_n)_{n\ge1}$ of elements
of~$k[T]$. Since~$k[T]$ is a unique factorization domain, each
term of the sequence factorizes into a product of irreducible
polynomials over~$k$, so we may ask which terms have an
irreducible factor which is not a factor of an earlier term.
Irreducible factors with this property will be called
\emph{primitive prime divisors}. As usual, we
write~$\ord_{\pi}f$ (or~$\ord_{p}n$) for
the maximal power to which an
irreducible~$\pi$ divides~$f$ in~$k[T]$ (or to which a rational
prime~$p$ divides~$n$ in~$\mathbb Z$).

The specific sequence we are interested in has~$f_n=f^n-g^n$,
where~$f,g$ are non-zero, coprime, polynomials in~$k[T]$.

\begin{lem}\label{lem:piadicord}
If~$\pi\in k[T]$ is an irreducible dividing~$f_n$ for
some~$n\ge1$, then for~$\cha(k)=p>0$,
\[
\ord_{\pi}(f_{mn})=p^{\ord_p(m)}\ord_{\pi}(f_n),
\]
and for~$\cha(k)=0$,
\[
\ord_{\pi}(f_{mn})=\ord_{\pi}(f_n).
\]
\end{lem}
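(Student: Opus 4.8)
The plan is to adapt the classical lifting-the-exponent argument to the ring $k[T]$. Put $e=\ord_\pi(f_n)$, write $a=f^n$ and $b=g^n$, so that $f_n=a-b$, and note that since $f,g$ are coprime the irreducible $\pi$ divides neither $f$ nor $g$, hence divides neither $a$ nor $b$. Everything hinges on the factorization
\[
f_{mn}=a^{m}-b^{m}=(a-b)\sum_{i=0}^{m-1}a^{\,m-1-i}b^{\,i},
\]
which reduces the problem to computing $\ord_\pi$ of the second factor.

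First I would dispose of the case in which $\cha(k)$ does not divide $m$ --- in particular the whole of the characteristic-zero case, where the asserted identity is just $\ord_\pi(f_{mn})=\ord_\pi(f_n)$. Here $a\equiv b\pmod\pi$, so each of the $m$ summands above is congruent to $a^{m-1}$ modulo $\pi$, whence the sum is congruent to $m\,a^{m-1}$ modulo $\pi$; since $\pi\nmid a$ and $m$ is nonzero in $k$, the sum is a unit modulo $\pi$ and $\ord_\pi(f_{mn})=\ord_\pi(f_n)$, as required.

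For $\cha(k)=p>0$ and general $m$ I would write $m=p^{r}m'$ with $r=\ord_p(m)$ and $p\nmid m'$, and exploit the Frobenius identity $x^{p}-y^{p}=(x-y)^{p}$ in the commutative ring $k[T]$, applied $r$ times, to get $a^{p^{r}}-b^{p^{r}}=(a-b)^{p^{r}}=f_n^{\,p^{r}}$, so that $\ord_\pi\!\bigl(a^{p^{r}}-b^{p^{r}}\bigr)=p^{r}e$. Since $\pi$ still divides neither $a^{p^{r}}$ nor $b^{p^{r}}$, and $a^{p^{r}}\equiv b^{p^{r}}\pmod\pi$, while $p\nmid m'$, the argument of the previous paragraph, applied with $a,b,m$ replaced by $a^{p^{r}},b^{p^{r}},m'$, gives $\ord_\pi(f_{mn})=\ord_\pi\!\bigl(a^{p^{r}}-b^{p^{r}}\bigr)=p^{r}e=p^{\ord_p(m)}\ord_\pi(f_n)$.

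The computation is routine once set up; the point needing care is the order in which the two mechanisms are combined --- the congruence $\sum a^{m-1-i}b^{i}\equiv m a^{m-1}$ accounts for the part of $m$ prime to $p$, while the Frobenius collapse accounts for the $p$-part --- so I would be careful to peel off the factor $p^{r}$ first and only then invoke the unit-modulo-$\pi$ congruence, verifying at each stage that $\pi$ divides neither of the two base polynomials in play.
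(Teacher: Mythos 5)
Your proof is correct and follows essentially the same two-step strategy as the paper: isolate the prime-to-$p$ part of $m$ by showing the relevant cofactor is a unit modulo $\pi$ (the paper does this by binomially expanding $(g^n+\pi^aQ)^m$, you by reducing the geometric sum $\sum a^{m-1-i}b^i\equiv m\,a^{m-1}\pmod\pi$ --- the same computation in different clothing), and then absorb the $p$-part via the Frobenius identity $(x-y)^{p^r}=x^{p^r}-y^{p^r}$. The order in which you combine the two mechanisms is reversed relative to the paper, but both orders are valid.
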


\begin{proof}
We may write
\[
f^n-g^n=\pi^{\ord_{\pi}(f_n)}Q
\]
for some~$Q\in k[T]$ with~$\pi\notdivides Q$.
Write~$a=\ord_{\pi}(f_n)$, so
\[
f^{mn}
=
(g^n+\pi^{a}Q)^m
=
g^{mn}+\sum_{i=1}^m{m\choose i}
\pi^{ai}Q^ig^{n(m-i)}.
\]
Thus
\begin{equation}\label{eq:ordconclude}
f_{mn}=m\pi^{a}g^{n(m-1)}Q+\sum_{i=2}^m{m\choose i} \pi^{ai}Q^ig^{n(m-i)}.
\end{equation}
We deduce that if~$\cha(k)=p>0$, then for~$p\notdivides m$ (or
for~$\cha(k)=0$),
\[
\ord_{\pi}(f_{mn})=\ord_{\pi}(f_n).
\]
Now suppose that~$m=p^ek$ with~$e>0$ and~$p\notdivides k$. Then,
for~$\cha(k)=p>0$,
\[
f^{nm}-g^{nm}=(f^{nk}-g^{nk})^{p^e}.
\]
Now~$\ord_{\pi}(f_{nk})=\ord_{\pi}(f_n)$ since~$p\notdivides
k$, so~$\ord_{\pi}(f_{mn})=p^e\ord_{\pi}(f_n)$ as required.
\end{proof}

Recall that a sequence~$(f_n)$ is a divisibility sequence
if~$f_r\divides f_s$ whenever~$r\divides s$, and is a strong
divisibility sequence if~$\gcd(f_{r},f_{s})=f_{\gcd(r,s)}$ for all~$r,s\ge1$.

\begin{lem}\label{lem:sdlemma}
The sequence~$(f_n)_{n\geqslant 1}$ is a strong divisibility
sequence.
\end{lem}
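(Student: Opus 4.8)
The plan is to establish the stronger assertion $\gcd(f_r,f_s)=f_{\gcd(r,s)}$ (as always in~$k[T]$, up to a nonzero scalar) directly, by induction on~$r+s$; taking~$r\mid s$ then shows in passing that~$(f_n)$ is even a divisibility sequence, so Lemma~\ref{lem:sdlemma} follows at once.

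The engine of the induction is the identity
\[
f_r=f^s\,f_{r-s}+g^{\,r-s}\,f_s\qquad(r>s\ge1),
\]
which is immediate on expanding the right-hand side, together with the coprimality fact~$\gcd(f,f_s)=1$: any irreducible~$\pi$ dividing both~$f$ and~$f_s=f^s-g^s$ would divide~$g^s$, hence~$g$, against~$\gcd(f,g)=1$; consequently~$\gcd(f^s,f_s)=1$ in the unique factorization domain~$k[T]$. For the induction, the case~$r=s$ is trivial. If~$r\neq s$, say~$r>s$, the identity and the observation that~$\gcd(A+Bf_s,f_s)=\gcd(A,f_s)$ give
\[
\gcd(f_r,f_s)=\gcd\bigl(f^s f_{r-s},\,f_s\bigr)=\gcd(f_{r-s},f_s),
\]
the last step because~$\gcd(f^s,f_s)=1$. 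Since~$(r-s)+s<r+s$ and~$\gcd(r-s,s)=\gcd(r,s)$, the inductive hypothesis gives~$\gcd(f_{r-s},f_s)=f_{\gcd(r,s)}$, completing the step.

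I do not anticipate a genuine obstacle; the only thing needing care is that the gcd manipulations above are legitimate, which they are because~$k[T]$ is a principal ideal domain, so each displayed equality is just an equality of the ideals~$(\,\cdot\,,f_s)$. (The degenerate situations in which some~$f_n$ vanishes force~$f$ and~$g$ to be coprime, hence unit, constants, and are covered by the same computation under the convention~$\gcd(0,c)=c$.) An alternative, somewhat longer route would avoid the identity and instead pin down~$\ord_\pi$ of~$\gcd(f_r,f_s)$ for each irreducible~$\pi$ by combining Lemma~\ref{lem:piadicord} with the fact that~$\{n\ge1:\pi\divides f_n\}$ is precisely the set of multiples of the least such~$n$; but the identity-based induction is cleaner and requires no case split on the characteristic.
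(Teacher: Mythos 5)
Your argument is correct, and it is genuinely different from the one in the paper. The paper's proof applies B\'ezout's lemma to write $\ell=\gcd(m,n)=cn-dm$ and then uses the single identity
\[
f_{cn}(f^{dm}+g^{dm})-f_{dm}(f^{cn}+g^{cn})=2f^{dm}g^{dm}f_{\ell},
\]
so that a common divisor of $f_m$ and $f_n$ divides $2f^{dm}g^{dm}f_{\ell}$ and hence, since $fg$ is coprime to every $f_k$ and $2$ is a unit, divides $f_{\ell}$; the explicit factor of $2$ is precisely why the paper's standing hypothesis of odd characteristic enters here. You instead run the subtractive Euclidean algorithm on the indices through the identity $f_r=f^sf_{r-s}+g^{r-s}f_s$ together with $\gcd(f^s,f_s)=1$, inducting on $r+s$; each step is a legitimate ideal computation in the PID $k[T]$, and the descent terminates at the pair $(\gcd(r,s),\gcd(r,s))$. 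Both proofs are short, but yours buys something real: no division by $2$ occurs, so the argument works in every characteristic. In particular it settles the point raised after Theorem~\ref{thm:fnfieldZsig}, where the authors remark that it is not clear how to prove strong divisibility when $\cha(k)=2$, and it subsumes Lemma~\ref{lem:Banglem} (take $g=1$, where your identity reads $h_r=f^sh_{r-s}+h_s$). The parenthetical about degenerate cases is harmless but unnecessary: if some $f_n$ vanished then $f$ and $g$ would both be units, a situation outside the intended scope of the sequence.
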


\begin{proof}
Fix~$m,n\in\mathbb N$, and let~$\ell=\gcd(m,n)$. It is clear
that the sequence~$(f_n)$ is a divisibility sequence,
so~$f_{\ell}\divides\gcd(f_m,f_n)$. By B{\'e}zout's lemma there
exist~$c,d\in\mathbb{N}$ with~$\ell=cn-dm$, and
\begin{equation}\label{addedone}
f_{cn}(f^{dm}+g^{dm})-f_{dm}(f^{cn}+g^{cn})
=
2f^{dm}g^{dm}f_{\ell}.
\end{equation}
Any common divisor of~$f_m$ and~$f_n$ must divide~$f_{cn}$
and~$f_{dm}$. Since~$k$ has odd characteristic,~$2$ is a unit
in~$k[T]$ and so~\eqref{addedone} shows that any common divisor
of~$f_n$ and~$f_m$ divides~$f^{dm}g^{dm}f_{\ell}$. Since
both~$f$ and~$g$ are coprime to~$f_k$ for any~$k$, any divisor
of~$f_m$ and~$f_n$ divides~$f_{\ell}$, completing the proof.
\end{proof}

We will use the following simple observation several times.
Let~$K$ be a field, and let~$\Phi_d\in K[x,y]$ denote the~$d$th
homogeneous cyclotomic polynomial. If~$f,g\in K[T]$
have~$\deg(f)\neq\deg(g)$, then it is clear that~$\Phi_n(f,g)$
is not a unit for any~$n\in\mathbb{N}$. If~$\deg(f)=\deg(g)=d$,
and~$\zeta$ is a primitive~$n$th root of unity over~$K$, then
\[
\Phi_n(f,g)=\prod_{\genfrac{}{}{0pt}{}{i=1,}{\gcd(i,n)=1}}^{n}
(f-\zeta^ig).
\]
For~$\Phi_n(f,g)$ to be a unit requires that~$f-\zeta^ig$ is a
unit for each~$i$. Write
\[
f=\sum_{j=1}^da_jT^j, g=\sum_{j=1}^db_jT^j.
\]
For~$f-\zeta^ig$ to be a unit requires that~$a_d=\zeta^ib_d$.
Now for~$n>2$, the Euler function~$\phi(n)\geqslant 2$, and so we can
pick~$0<i_1<i_2<n$ with~$\gcd(i_1,n)=\gcd(i_2,n)=1$.
If~$a_d=\zeta^{i_1}b_d$ and~$a_d=\zeta^{i_2}b_d$, then
as~$a_d,b_d\neq 0$ by assumption, we must
have~$\zeta^{i_2-i_1}=1,$ contradicting the fact that~$\zeta$
is a primitive~$n$th root of unity. We deduce that, for coprime
polynomials~$f,g\in k[T]$,
\begin{equation}\label{addedeqnA}
\Phi_n(f,g)\mbox{ is not a unit if }n>2.
\end{equation}

These preparatory results give a polynomial form of Zsigmondy's
theorem as follows.

\begin{thm}\label{thm:fnfieldZsig}
Suppose~$\cha(k)=p>0$, and let~$P$ be the sequence obtained
from~$(f_n)_{n\geqslant 1}$ by deleting the terms~$f_n$
with~$p\divides n$. Then each term of~$P$ beyond the second has
a primitive prime divisor. If~$\cha(k)=0$, then the
sequence~$(f_n)_{n\geqslant 1}$ has the property that all terms
beyond the second have a primitive prime divisor.
\end{thm}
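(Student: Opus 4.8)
The plan is to combine the cyclotomic factorization of $f_{n}$ with the two lemmas already proved. Recall the identity $x^{n}-y^{n}=\prod_{d\divides n}\Phi_{d}(x,y)$ in $\mathbb{Z}[x,y]$; substituting $x=f$ and $y=g$ gives
\[
f_{n}=f^{n}-g^{n}=\prod_{d\divides n}\Phi_{d}(f,g)\qquad\text{in }k[T].
\]
For $n>2$ the factor $\Phi_{n}(f,g)$ is not a unit by~\eqref{addedeqnA}, so it has an irreducible factor $\pi\in k[T]$, and I would show that any such $\pi$ is a primitive prime divisor of $f_{n}$. This settles both assertions simultaneously: in characteristic zero it applies to every $n>2$, and in characteristic $p>0$ to every term of $P$ beyond the second, that is, to every $f_{n}$ with $n>2$ and $p\notdivides n$.

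The heart of the argument is a coprimality statement: \emph{if $d_{1}\neq d_{2}$ both divide $n$, then $\Phi_{d_{1}}(f,g)$ and $\Phi_{d_{2}}(f,g)$ are coprime in $k[T]$}, under the standing assumption that $p\notdivides n$ when $\cha(k)=p>0$. To prove this I would suppose that an irreducible $\pi$ divides both, put $d=\max(d_{1},d_{2})$ and $\delta=\gcd(d_{1},d_{2})$, so that $\delta\divides d\divides n$ and $\delta<d$ (strict because $d_{1}\neq d_{2}$), and argue as follows. From $\pi\divides f_{d_{1}}$ and $\pi\divides f_{d_{2}}$ and Lemma~\ref{lem:sdlemma} we get $\pi\divides\gcd(f_{d_{1}},f_{d_{2}})=f_{\delta}$. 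Since $d\divides n$ we have $p\notdivides d$, hence $p\notdivides d/\delta$, so Lemma~\ref{lem:piadicord} yields $\ord_{\pi}(f_{d})=\ord_{\pi}(f_{\delta})$ (and this also holds in characteristic zero). Therefore $\pi$ does not divide $f_{d}/f_{\delta}=\prod_{c\divides d,\;c\notdivides\delta}\Phi_{c}(f,g)$; but $\Phi_{d}(f,g)$ occurs among these factors, as $d\divides d$ and $d\notdivides\delta$, and $\pi\divides\Phi_{d}(f,g)$ --- a contradiction.

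Granting this, suppose an irreducible factor $\pi$ of $\Phi_{n}(f,g)$ failed to be a primitive prime divisor of $f_{n}$; then $\pi\divides f_{m}$ for some $m<n$, so $\pi$ divides $\gcd(f_{m},f_{n})=f_{\ell}$ with $\ell=\gcd(m,n)$, where $\ell\divides n$ and $\ell\le m<n$. From $f_{\ell}=\prod_{d\divides\ell}\Phi_{d}(f,g)$ we obtain $\pi\divides\Phi_{d_{0}}(f,g)$ for some $d_{0}\divides\ell$; then $d_{0}\divides n$ and $d_{0}\le\ell<n$, so $d_{0}\neq n$, while $\pi$ also divides $\Phi_{n}(f,g)$. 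Since $p\notdivides n$ in positive characteristic, this contradicts the coprimality of $\Phi_{d_{0}}(f,g)$ and $\Phi_{n}(f,g)$, so $\pi$ must after all be a primitive prime divisor of $f_{n}$.

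The step I expect to be the main obstacle is the coprimality statement, precisely because of the Frobenius behaviour that forces the hypothesis $p\notdivides n$: in characteristic $p$ one has $f_{pn}=f_{n}^{p}$, so $\Phi_{pd}(f,g)$ and $\Phi_{d}(f,g)$ really do share irreducible factors, and $p\notdivides n$ is used exactly to make $\ord_{p}(d/\delta)=0$ in the invocation of Lemma~\ref{lem:piadicord}. The same identity $f_{pn}=f_{n}^{p}$ also explains why the deleted terms cannot be retained --- every irreducible factor of such an $f_{pn}$ already divides the earlier term $f_{n}$ --- so the restriction in the theorem is intrinsic rather than an artefact of the method. A last routine point is to check that $\Phi_{n}(f,g)$ is a genuine, nonzero non-unit; this is immediate from~\eqref{addedeqnA} once the trivial case $f,g\in k$ is set aside.
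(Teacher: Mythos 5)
Your proof is correct. It follows the paper's overall strategy --- factor $f_n=\prod_{d\mid n}\Phi_d(f,g)$, invoke~\eqref{addedeqnA} to guarantee that $\Phi_n(f,g)$ has an irreducible factor for $n>2$, and show that every such factor is a primitive prime divisor of $f_n$ --- but the mechanism in the last step is genuinely different. The paper applies M\"obius inversion to write $\ord_{\pi}(\Phi_n(f,g))=\sum_{d\mid n}\mu(n/d)\ord_{\pi}(f_d)$ and shows this vanishes for any non-primitive $\pi$: taking $m$ minimal with $\pi\mid f_m$, Lemmas~\ref{lem:sdlemma} and~\ref{lem:piadicord} collapse the sum to $\ord_{\pi}(f_m)\sum_{d\mid n/m}\mu(n/dm)=0$. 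You instead prove the intermediate statement that $\Phi_{d_1}(f,g)$ and $\Phi_{d_2}(f,g)$ are coprime for distinct divisors $d_1\neq d_2$ of $n$ when $p\nmid n$, using the same two lemmas: strong divisibility pins a common prime $\pi$ down to $f_{\delta}$ with $\delta=\gcd(d_1,d_2)$, Lemma~\ref{lem:piadicord} then forces $\ord_{\pi}(f_d)=\ord_{\pi}(f_{\delta})$ for $d=\max(d_1,d_2)$, so $\pi$ cannot divide $f_d/f_{\delta}$, which nonetheless contains $\Phi_d(f,g)$ as a factor --- a contradiction. Both arguments are sound and of comparable length; yours dispenses with M\"obius inversion entirely and yields the slightly stronger structural fact of pairwise coprimality of the cyclotomic factors, from which the paper's first remark (that the primitive part of $f_n$ is exactly $\Phi_n(f,g)$) also drops out immediately. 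Your closing caveat about setting aside the case $f,g\in k$ before citing~\eqref{addedeqnA} is a fair point of care: the paper's derivation of that statement tacitly assumes $f$ and $g$ are not both constants.
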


\begin{proof}
Notice that
\begin{equation}\label{eq:fnfield1}
f_n
=
\prod_{d\vert n}\Phi_d(f,g),
\end{equation}
and so
\[
\Phi_n(f,g)=\prod_{d\vert n}f_d^{\mu(n/d)}
\]
by M{\"o}bius inversion. Thus
\begin{equation}\label{eq:ordereqn}
\ord_{\pi}(\Phi_n(f,g))
=
\sum_{d\vert n}\mu(\textstyle\frac{n}{d})\ord_{\pi}(f_d)
\end{equation}
for any prime~$\pi\in k[T]$. Suppose now that~$\pi$ is a prime
divisor of~$f_n$ which is not primitive, so that~$\pi \divides
f_m$ for some~$m<n$ chosen to be minimal with that property.
Then~$m\divides n$ by Lemma~\ref{lem:sdlemma} and
\[
\ord_{\pi}(f_{mk})
=
\ord_{\pi}(f_{m})
\]
for any~$k$ with~$p\notdivides k$, by
Lemma~\ref{lem:piadicord}. In addition,
we claim it follows
that~$\ord_{\pi}(f_{c})=0$ unless~$m\divides c$. Suppose
this were not the case,
then~$\ord_{\pi}(f_{c})>0$ for some~$c$ with~$m\notdivides c$,
and Lemma~\ref{lem:sdlemma} yields $\pi\mid f_{\gcd(m,c)}$.
However, since~$m\notdivides c$,~$\gcd(m,c)<m$,
so this contradicts
the minimality of~$m$. Thus~\eqref{eq:ordereqn} gives
\begin{eqnarray}
\ord_{\pi}(\Phi_n(f,g))
&=&
\sum_{d\mid \frac{n}{m}}\mu(\textstyle\frac{n}{dm})\ord_{\pi}(f_{dm}) \nonumber \\
&=&
\sum_{d\mid \frac{n}{m}}\mu(\textstyle\frac{n}{dm})\ord_{\pi}(f_{m}) \nonumber \\
&=&
\ord_{\pi}(f_{m})\sum_{d\mid \frac{n}{m}}\mu(\textstyle\frac{n}{dm})=0\nonumber
\end{eqnarray}
as~$m<n$. We deduce that any non-primitive prime divisor
of~$f_n$ does not divide~$\Phi_n(f,g)$. By~\eqref{addedeqnA}
above,~$\Phi_n(f,g)$ is non-constant for~$n>2$, and
so~$\Phi_n(f,g)$ has a prime divisor in~$k[T]$. Therefore, as
any prime divisor of~$\Phi_n(f,g)$ is primitive, every term
in~$P$ beyond the second has a primitive prime divisor. The
proof for the characteristic zero case follows in exactly the
same way.
\end{proof}

We record two simple observations that arise from this
argument.
\begin{enumerate}
\item In fact~\eqref{eq:fnfield1} shows a little more: any
    primitive prime divisor of~$f_n$ must divide~$\Phi_n(f,g)$,
    and so the \emph{primitive part} (that is, the product of
    all the primitive prime divisors to their respective
powers) of~$f_n$ is
    exactly~$\Phi_n(f,g)$. This gives a lower bound
for the size of the primitive part~$f_n^*$ of~$f_n$
under
the assumption that~$\deg(f)\neq\deg(g)$:
\[
\deg(f_n^*)=\phi(n)\max\{\deg(f),\deg(g)\}>n^{1-\delta}
\max\{\deg(f),\deg(g)\}
\]
for~$\delta>0$ and large enough~$n$.
\item It is also clear that we need to remove all
    the terms from the sequence with index divisible by~$p$.
    If~$n=pc$ for some~$c\ge1$, then~$f_n=f_{pc}=(f_c)^p$,
    so any term with index divisible
    by~$p$ fails to have a primitive prime divisor.
\end{enumerate}

Theorem~\ref{thm:fnfieldZsig} is a form of Zsigmondy theorem
for polynomial rings, but it is not clear how to prove strong
divisibility when~$\cha(k)=2$. Computations suggest that the
result is still true in this case. When~$g=1$ and~$\cha(k)=2$,
the sequence~$(f_n)_{n\geqslant 1}$ satisfies the strong
divisibility property, giving the analogue of Bang's Theorem in
all characteristics.

\begin{lem}\label{lem:Banglem}
Let~$\cha(k)=2$ and let~$f\in k[T]$ be a non-zero non-unit.
Then the sequence $(h_n=f^n-1)_{n\geqslant1}$ is a strong
divisibility sequence.
\end{lem}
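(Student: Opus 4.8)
The plan is to follow the proof of Lemma~\ref{lem:sdlemma}, but to exploit the fact that when $g=1$ the troublesome factor of $2$ in the identity~\eqref{addedone} is replaced by a one-sided telescoping identity that is valid in every characteristic.

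First I would check that $(h_n)$ is a divisibility sequence. If $r\divides s$, write $s=rk$; then with $x=f^r$ we have $h_r=x-1$ dividing $x^k-1=f^s-1=h_s$. (Here $f$ being a non-zero non-unit guarantees $\deg f\ge1$, so every $h_n$ is a non-zero polynomial and all the $\gcd$'s below are well defined.) In particular, writing $\ell=\gcd(m,n)$, this immediately gives $h_\ell\divides\gcd(h_m,h_n)$.

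For the reverse divisibility I would argue as follows. By B{\'e}zout's lemma the equation $cn-dm=\ell$ has an integer solution (since $\gcd(m,n)=\ell$ divides $\ell$); replacing a solution $(c,d)$ by $(c+tm,\,d+tn)$ for large~$t$ leaves $cn-dm$ unchanged, so we may assume $c,d\ge1$. Then
\[
h_{cn}-h_{dm}=(f^{cn}-1)-(f^{dm}-1)=f^{dm}\bigl(f^{cn-dm}-1\bigr)=f^{dm}h_\ell .
\]
Let $\delta$ be any common divisor of $h_m$ and $h_n$. Since $(h_n)$ is a divisibility sequence and $m\divides dm$, $n\divides cn$, we get $\delta\divides h_{dm}$ and $\delta\divides h_{cn}$, hence $\delta\divides f^{dm}h_\ell$ by the displayed identity. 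But $f$ is coprime to $h_m=f^m-1$, because $f^m-1\equiv-1$ modulo any irreducible factor of $f$; hence $\delta$ is coprime to $f$, and therefore $\delta\divides h_\ell$. Thus $\gcd(h_m,h_n)\divides h_\ell$, and combining with the previous paragraph, $\gcd(h_m,h_n)=h_{\gcd(m,n)}$.

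There is essentially no serious obstacle: the only points to watch are the choice of $c,d$ as positive integers (so that $f^{dm}$ is a genuine polynomial factor and the identity above is meaningful) and the coprimality of $f$ with each $h_n$. The conceptual reason this works in characteristic~$2$, where the general argument of Lemma~\ref{lem:sdlemma} breaks down, is that the hypothesis $g=1$ turns the symmetric identity~\eqref{addedone}, whose right-hand side carries a factor~$2$, into the characteristic-free telescoping identity $h_{cn}-h_{dm}=f^{dm}h_\ell$.
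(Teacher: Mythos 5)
Your proposal is correct and follows essentially the same route as the paper: the identity $h_{cn}-h_{dm}=f^{dm}h_{\ell}$ with $c,d\in\mathbb{N}$ chosen via B{\'e}zout, together with the coprimality of $f$ and each $h_n$, is exactly the paper's argument. The extra details you supply (adjusting $c,d$ to be positive, and the explicit reason $f$ is coprime to $h_m$) are welcome but not a departure from the published proof.
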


\begin{proof}
As before, let~$\ell=\gcd(m,n)$
so~$h_{\ell}\divides\gcd(h_m,h_n)$ by the divisibility
property. As before, there exist~$c,d\in\mathbb{N}$
with~$\ell=cn-dm$. A common divisor of~$h_n$ and~$h_m$ must
divide~$h_{cn}$ and~$h_{dm}$, and
\[
h_{cn}-h_{dm}=f^{dm}h_{\ell},
\]
so any common divisor of~$h_n$ and~$h_m$ must
divide~$f^{dm}h_{\ell}$. Since~$f$ and~$h_k$ are coprime for
any~$k$, any divisor of~$h_m$ and~$h_n$ must divide~$h_{\ell}$.
\end{proof}

\begin{cor}\label{cor:Bangfnfield}
Assume that~$\cha(k)=p\geqslant 2$ and~$h_n\in
k[T]$ is as in Lemma~\ref{lem:Banglem}.
Then the sequence obtained
from~$(h_n)_{n\ge1}$ by deleting terms with index
divisible by~$p$ has the property that all terms beyond the
first have a primitive prime divisor.
\end{cor}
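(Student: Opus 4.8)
The plan is to separate the cases $p$ odd and $p=2$, in each case running the argument already used for Theorem~\ref{thm:fnfieldZsig} with $g=1$ (so that the sequence $(f_n)$ becomes $(h_n)$ with $h_n=f^n-1$), while tracking why the conclusion sharpens from ``beyond the second'' to ``beyond the first''. The common ingredient is that, since $f$ is a non-zero non-unit, $\deg(f)\ge1\neq0=\deg(1)$, so by the observation preceding~\eqref{addedeqnA} the polynomial $\Phi_n(f,1)$, which has degree $\phi(n)\deg(f)\ge1$, is a non-unit and hence has a prime divisor in $k[T]$, for \emph{every} $n\ge1$ and not merely for $n>2$.

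For $p$ odd, the proof of Theorem~\ref{thm:fnfieldZsig} applies essentially verbatim with $g=1$: Lemma~\ref{lem:sdlemma} is available, and that argument (via Lemma~\ref{lem:piadicord}) shows that for every $n$ with $p\notdivides n$ no non-primitive prime divisor of $h_n$ divides $\Phi_n(f,1)$. Since $\Phi_n(f,1)$ is a non-unit, it has a prime divisor, which by~\eqref{eq:fnfield1} divides $h_n$ and is then a primitive prime divisor of $h_n$. So $h_n$ has a primitive prime divisor for every $n$ with $p\notdivides n$, in particular for every such term beyond the first.

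For $p=2$, Lemma~\ref{lem:sdlemma} is not available, so I would replace it by Lemma~\ref{lem:Banglem}, which says $(h_n)$ is a strong divisibility sequence, and then repeat the argument of Theorem~\ref{thm:fnfieldZsig} for \emph{odd} indices $n$ only. Given a non-primitive prime divisor $\pi$ of $h_n$, choose $m<n$ minimal with $\pi\divides h_m$; then $m\divides n$ by Lemma~\ref{lem:Banglem} and the minimality of $m$, and $m$ is odd since $n$ is. Hence every divisor $d$ of $n/m$ is odd, so Lemma~\ref{lem:piadicord} gives $\ord_{\pi}(h_{dm})=2^{\ord_{2}(d)}\ord_{\pi}(h_m)=\ord_{\pi}(h_m)$, while minimality of $m$ together with Lemma~\ref{lem:Banglem} gives $\ord_{\pi}(h_c)=0$ unless $m\divides c$. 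M\"obius inversion of the factorization $h_n=\prod_{d\mid n}\Phi_d(f,1)$ (see~\eqref{eq:fnfield1}) then yields
\[
\ord_{\pi}(\Phi_n(f,1))=\ord_{\pi}(h_m)\sum_{d\mid \frac{n}{m}}\mu(\textstyle\frac{n}{dm})=0
\]
as $m<n$. So no non-primitive prime divisor of $h_n$ divides $\Phi_n(f,1)$, and since $\Phi_n(f,1)$ is a non-unit it has a prime divisor, which is then a primitive prime divisor of $h_n$. This covers every odd $n$, that is, every term surviving the deletion of the even-indexed terms, beyond the first.

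The calculations here are routine, and I do not anticipate a genuine obstacle. The two points that need care are the restriction to odd $n$ in characteristic two --- forced so that every divisor appearing in the M\"obius sum is odd, which is exactly what makes the Frobenius power in Lemma~\ref{lem:piadicord} trivial --- and the observation that $f$ being a non-unit is what makes $\Phi_1(f,1)=f-1$ and $\Phi_2(f,1)=f+1$ non-units, upgrading ``beyond the second'' to ``beyond the first''.
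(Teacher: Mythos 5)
Your proof is correct and is essentially the argument the paper intends (the corollary is stated without an explicit proof, as an immediate consequence of Lemma~\ref{lem:Banglem} and the method of Theorem~\ref{thm:fnfieldZsig}): substitute Lemma~\ref{lem:Banglem} for Lemma~\ref{lem:sdlemma} when $p=2$, and use the $\deg(f)\neq\deg(g)$ observation in place of~\eqref{addedeqnA} to get non-unit cyclotomic values for all $n$, which is exactly what upgrades ``beyond the second'' to ``beyond the first''. No gaps.
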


\section{Polynomial Lucas sequences}

In this section we provide an analogue of the result of
Bilu, Hanrot and
Voutier
on primitive prime divisors in Lucas sequences.
Let~$k$ be a field, and fix~$\alpha\in\bar{k}$
such that~$[k(\alpha):k]=2$. Let~$\sigma$ be
the non-identity~$k$-automorphism of~$k(\alpha)$, and define the
polynomial sequence~$(L_n)_{n\ge1}$ by
\[
L_n=\frac{P^n-(P_{\sigma})^n}{P-P_{\sigma}},
\]
where for~$P=\sum_{i=0}^da_iT^i$ we
write~$P_{\sigma}=\sum_{i=0}^d\sigma(a_i)T^i$.
Then~$L_n\in k[T]$ and
we can again ask which terms
of the sequence see new irreducible factors.

We follow the path of Carmichael~\cite{carmichael} in deducing
some elementary arithmetic properties of the sequence.
In order to do this, there is a degenerate possibility that
must be avoided, so from now on we assume that~$P$ has
the property that~$P+P_{\sigma}$ and~$PP_{\sigma}$ are
coprime in~$k[T]$. Without this property, the sequence is not a strong
divisibility sequence. For example, if~$k=\mathbb{Q}$,~$\alpha=\sqrt{2}$,
and~$P=T^2+(1+\sqrt{2})T+\sqrt{2}$, then~$\gcd(L_2,L_3)=T+1\neq 1=L_1$.

\begin{lem}\label{lem:coprimelem}
The polynomials~$PP_{\sigma}$ and~$L_n$ are
coprime in~$k[T]$ for~$n\ge1$.
\end{lem}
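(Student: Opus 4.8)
The plan is to reduce the statement to a coprimality fact about $P$ and $P_\sigma$ over the quadratic extension, and then argue one prime at a time. First I would record the telescoping identity
\[
L_n = P^{n-1}+P^{n-2}P_\sigma+\cdots+P_\sigma^{n-1}=\sum_{i=0}^{n-1}P^{i}P_\sigma^{\,n-1-i},
\]
which exhibits $L_n$ as an element of $k(\alpha)[T]$ (it lies in $k[T]$ because it is fixed by $\sigma$, as already noted), and observe that $\deg P=\deg P_\sigma$ since $\sigma$ merely permutes coefficients.

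The key preliminary step is to show that $P$ and $P_\sigma$ are coprime in $k(\alpha)[T]$. For this I would invoke the standing hypothesis that $P+P_\sigma$ and $PP_\sigma$ are coprime in $k[T]$, together with the fact that the $\gcd$ of two polynomials in $k[T]$ is unchanged, up to units, on passing to $k(\alpha)[T]$ — this follows from the B\'ezout relation $d=uA+vB$ (with $u,v\in k[T]$) produced by the Euclidean algorithm over $k[T]$. Hence, if an irreducible $\rho\in k(\alpha)[T]$ divided both $P$ and $P_\sigma$, it would divide $P+P_\sigma$ and $PP_\sigma$, contradicting $\gcd_{k(\alpha)[T]}(P+P_\sigma,PP_\sigma)=\gcd_{k[T]}(P+P_\sigma,PP_\sigma)=1$.

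Now suppose for contradiction that some irreducible $\pi\in k[T]$ divides both $PP_\sigma$ and $L_n$. Factoring $\pi$ in $k(\alpha)[T]$, choose an irreducible $\rho\in k(\alpha)[T]$ with $\rho\mid\pi$; then $\rho$ divides $PP_\sigma$ and $L_n$ in $k(\alpha)[T]$. Since $k(\alpha)[T]$ is a unique factorization domain, $\rho\mid P$ or $\rho\mid P_\sigma$; applying $\sigma$ if necessary (which replaces $\rho$ by $\sigma(\rho)$, still dividing $\pi$ and $L_n$), we may assume $\rho\mid P$. Reducing the displayed sum modulo $\rho$ kills every term except the last, giving $L_n\equiv P_\sigma^{\,n-1}\pmod{\rho}$, so $\rho\mid P_\sigma^{\,n-1}$ and hence $\rho\mid P_\sigma$. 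Then $\rho$ divides both $P$ and $P_\sigma$, contradicting the coprimality established above.

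I expect the only point genuinely needing care to be the base-change step for $\gcd$'s — justifying $\gcd_{k(\alpha)[T]}(P+P_\sigma,PP_\sigma)=1$ from the corresponding statement over $k[T]$ — since everything else is a routine modular computation. This is also precisely where the hypothesis on $P+P_\sigma$ and $PP_\sigma$ is used, and the example $P=T^2+(1+\sqrt2)T+\sqrt2$ shows it cannot be dropped: there $T+1$ divides both $P$ and $P_\sigma$, so $T+1$ divides $L_n$ for $n\ge2$.
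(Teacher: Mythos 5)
Your proof is correct, but it follows a genuinely different route from the paper's. The paper stays entirely inside $k[T]$: it writes $L_n=P^{n-1}+P_{\sigma}^{n-1}+PP_{\sigma}Q_2$ and compares with the binomial expansion $(P+P_{\sigma})^{n-1}=P^{n-1}+P_{\sigma}^{n-1}+PP_{\sigma}Q_1$, so that an irreducible common factor of $PP_{\sigma}$ and $L_n$ would divide $(P+P_{\sigma})^{n-1}$, hence $P+P_{\sigma}$, contradicting the standing hypothesis directly. You instead pass to $k(\alpha)[T]$, use the invariance of the $\gcd$ under field extension (correctly justified via B\'ezout) to deduce that $P$ and $P_{\sigma}$ are coprime there, and then run a one-line congruence modulo an irreducible factor $\rho$ of $\pi$, using the coefficient-wise Galois action to reduce to the case $\rho\mid P$; every step checks out, including the reduction $L_n\equiv P_{\sigma}^{n-1}\pmod{\rho}$. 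The paper's argument is shorter and needs no base change; yours buys a conceptual clarification, namely that the non-degeneracy hypothesis is equivalent to $P$ and $P_{\sigma}$ having no common factor over $k(\alpha)$ (your two implications together establish both directions), which is the natural analogue of the coprimality condition on the roots of a Lucas pair, and it explains the paper's counterexample $P=T^2+(1+\sqrt{2})T+\sqrt{2}$ as precisely the case $T+1\mid\gcd(P,P_{\sigma})$.
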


\begin{proof}
The binomial expansion shows that
\begin{equation}\label{eq:diveq1}
(P+P_{\sigma})^{n-1}=P^{n-1}+(P_{\sigma})^{n-1}
+PP_{\sigma}Q_1 \end{equation} for some~$Q_1\in
k[T]$.
Moreover,
\begin{equation}\label{eq:diveq2}
L_n=P^{n-1}+(P_{\sigma})^{n-1}+PP_{\sigma}Q_2
\end{equation}
for some~$Q_2\in k[T]$.
If~$Q_3\in k[T]$ is irreducible and
divides both~$PP_{\sigma}$ and~$L_n$ then, by~\eqref{eq:diveq2},
we have~$Q_3\divides P^{n-1}+(P_{\sigma})^{n-1}$. Then,
by~\eqref{eq:diveq1},~$Q_3\divides P+P_{\sigma}$,
contradicting the standing assumption that~$P+P_{\sigma}$ and~$PP_{\sigma}$
are coprime. Thus the greatest common divisor of~$PP_{\sigma}$
and~$L_n$ must be a unit.
\end{proof}

As mentioned above, we deduce the strong divisibility
property for our sequence.

\begin{lem}\label{lem:sdlemma2}
Assume that~$\cha(k)\neq 2$. Then the sequence~$(L_n)_{n\geqslant
1}$ is a strong divisibility sequence.
\end{lem}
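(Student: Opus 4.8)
The plan is to mimic the proof of Lemma~\ref{lem:sdlemma}. Fix $m,n\ge1$ and set $\ell=\gcd(m,n)$. Since $(L_n)$ is visibly a divisibility sequence --- because $P^r-(P_\sigma)^r$ divides $P^s-(P_\sigma)^s$ when $r\divides s$, and the denominator $P-P_\sigma$ is the same --- we already have $L_\ell\divides\gcd(L_m,L_n)$, so the work is all in the reverse divisibility. By B\'ezout's lemma pick $c,d\in\mathbb N$ with $\ell=cn-dm$. The aim is to produce an identity of the shape
\[
L_{cn}\,A-L_{dm}\,B=(\text{unit})\cdot (PP_\sigma)^{e}L_\ell
\]
for suitable $A,B\in k[T]$ and some exponent $e\ge0$, where the ``unit'' is a nonzero constant --- in characteristic $\ne2$ the constant $2$ that appears will be invertible, which is exactly where the hypothesis $\cha(k)\ne2$ enters (just as in Lemma~\ref{lem:sdlemma}). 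Any common divisor of $L_m$ and $L_n$ divides $L_{cn}$ and $L_{dm}$ by the divisibility property, hence divides the right-hand side; since such a common divisor is coprime to $PP_\sigma$ by Lemma~\ref{lem:coprimelem}, it must divide $L_\ell$, giving $\gcd(L_m,L_n)\divides L_\ell$ and completing the proof.

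The key computation is therefore the derivation of that identity. Writing $U=P$, $V=P_\sigma$, and using $P^r-V^r=(U-V)L_r$, one has
\[
L_{cn}(U^{dm}-V^{dm})-L_{dm}(U^{cn}-V^{cn})
=\frac{(U^{cn}-V^{cn})(U^{dm}-V^{dm})-(U^{dm}-V^{dm})(U^{cn}-V^{cn})}{U-V},
\]
which is zero, so that naive pairing is not the right one; instead I would pair $L_{cn}$ with $U^{dm}+V^{dm}$ and $L_{dm}$ with $U^{cn}+V^{cn}$, exactly as in~\eqref{addedone}. Expanding,
\[
(U^{cn}-V^{cn})(U^{dm}+V^{dm})-(U^{dm}-V^{dm})(U^{cn}+V^{cn})
=2\,U^{dm}V^{dm}\,(U^{cn-dm}-V^{cn-dm})
=2\,(UV)^{dm}(U^{\ell}-V^{\ell}),
\]
and dividing through by $U-V$ gives
\[
L_{cn}(U^{dm}+V^{dm})-L_{dm}(U^{cn}+V^{cn})=2\,(PP_\sigma)^{dm}\,L_\ell.
\]
This is the sought identity, with $e=dm$ and the unit equal to $2$. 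It lies in $k[T]$ because both sides do, and the displayed manipulation is just polynomial algebra in $U,V$.

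The main obstacle is really the bookkeeping to make sure the pairing identity lands in $k[T]$ with the factor $PP_\sigma$ visible (so that Lemma~\ref{lem:coprimelem} can be invoked) and with the troublesome constant isolated as a single unit. Once the displayed identity
\[
L_{cn}(P^{dm}+(P_\sigma)^{dm})-L_{dm}(P^{cn}+(P_\sigma)^{cn})=2\,(PP_\sigma)^{dm}\,L_\ell
\]
is in hand, everything else is the same routine as in Lemma~\ref{lem:sdlemma}: a common divisor of $L_m$ and $L_n$ divides the left side, hence $2(PP_\sigma)^{dm}L_\ell$; being coprime to $PP_\sigma$ and $2$ being a unit, it divides $L_\ell$; combined with $L_\ell\divides\gcd(L_m,L_n)$ this yields $\gcd(L_m,L_n)=L_\ell$. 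I do not expect any genuine difficulty beyond verifying the algebraic identity, and the role of $\cha(k)\ne2$ is precisely to keep the factor $2$ a unit, paralleling the remark already made after~\eqref{addedone}.
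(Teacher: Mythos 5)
Your proposal is correct and follows essentially the same route as the paper: the authors write $\widehat{L}_r=P^r+P_\sigma^r$ and use exactly the identity $L_{cn}\widehat{L}_{dm}-L_{dm}\widehat{L}_{cn}=2(PP_\sigma)^{dm}L_\ell$, then conclude via Lemma~\ref{lem:coprimelem} and the invertibility of $2$. Your algebraic verification of the identity and the handling of the hypothesis $\cha(k)\neq2$ match the paper's argument.
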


\begin{proof}
It is clear that~$(L_n)_{n\ge1}$ is a divisibility
sequence. As before, let~$\ell=\gcd(m,n)$ and
choose~$c,d\in\mathbb{N}$ with~$cn-dm=\ell.$ For brevity
write~$\widehat{L}_n=P^{n}+P_{\sigma}^{n}$, and notice that
\[
L_{cn}\widehat{L}_{dm}-L_{dm}\widehat{L}_{cn}=2(PP_{\sigma})^{dm}L_{\ell}.
\]
Hence a common divisor of~$L_n$ and~$L_m$
divides~$(PP_{\sigma})^{dm}L_{\ell}$, and hence must divide~$L_{\ell}$
by Lemma~\ref{lem:coprimelem}.
\end{proof}

The next result shows that in characteristic~$p$
we can still expect to find that, in general,
terms with index divisible by~$p$ once again fail
to produce primitive divisors.

\begin{lem}\label{lem:charp}
Let~$\cha(k)=p>2$. Then for~$n$ divisible by~$p$ (with
the possible exception of~$n=p$),~$L_n$ fails to have a primitive prime
divisor.
\end{lem}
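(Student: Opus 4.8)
The plan is to exploit the Frobenius identity $x^{p}-y^{p}=(x-y)^{p}$, which is available here since $\cha(k)=p$. I would write the given multiple of $p$ as $n=pm$; because the only positive multiple of $p$ smaller than $2p$ is $p$ itself, the hypothesis $n\ne p$ forces $m=n/p\ge 2$, and this is the reduction to work with. The whole argument then rests on a single explicit factorisation of $L_n$, together with the trivial special case $L_p=(P-P_{\sigma})^{p-1}$.

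For the factorisation, apply the Frobenius identity with $x=P^{m}$ and $y=(P_{\sigma})^{m}$, and then use the defining relation $P^{m}-(P_{\sigma})^{m}=(P-P_{\sigma})L_m$, which is a genuine identity in $k[T]$ because $L_m\in k[T]$:
\[
L_{pm}
=
\frac{P^{pm}-(P_{\sigma})^{pm}}{P-P_{\sigma}}
=
\frac{\bigl(P^{m}-(P_{\sigma})^{m}\bigr)^{p}}{P-P_{\sigma}}
=
\frac{\bigl((P-P_{\sigma})L_m\bigr)^{p}}{P-P_{\sigma}}
=
(P-P_{\sigma})^{p-1}L_m^{p}.
\]
Setting $m=1$ here recovers $L_p=(P-P_{\sigma})^{p-1}$, which I would record separately.

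The conclusion is then immediate. Let $\pi\in k[T]$ be an arbitrary irreducible factor of $L_n=(P-P_{\sigma})^{p-1}L_m^{p}$. Since $k[T]$ is a unique factorization domain and $\pi$ is prime, $\pi$ divides one of the two factors. Either $\pi\divides L_m$, in which case $\pi$ is already a factor of the earlier term $L_m$ because $m<pm=n$; or $\pi\divides P-P_{\sigma}$, in which case $\pi\divides(P-P_{\sigma})^{p-1}=L_p$, and $L_p$ is an earlier term because $p<pm=n$ — this is exactly where $m\ge 2$, that is $n\ne p$, is used. In either case $\pi$ divides some $L_j$ with $j<n$, so no irreducible factor of $L_n$ is a primitive prime divisor.

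There is no substantial obstacle: the content is the Frobenius identity plus the defining relation for $L_m$, and no appeal to the strong divisibility lemmas is needed. The one point requiring care is the boundary index $n=p$, where the chain of inequalities above degenerates precisely because $p$ is no longer strictly smaller than $n$; this is exactly the exception flagged in the statement, and it is a real one (for example, $L_p=(P-P_{\sigma})^{p-1}$ is a unit, hence has no prime divisor whatsoever, whenever $P-P_{\sigma}$ happens to be a non-zero constant).
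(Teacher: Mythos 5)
Your proof is correct and takes essentially the same route as the paper: both arguments reduce to the factorisation $L_{pm}=(P-P_{\sigma})^{p-1}L_m^{p}$, which the paper obtains by writing out the binomial identity for $(L'_m)^p$ and noting that all the cross terms have coefficients divisible by $p$ --- i.e.\ exactly the Frobenius identity you invoke directly. Your explicit verification that every irreducible factor of $L_n$ then divides the earlier term $L_p$ or $L_m$ (using $m\ge 2$) just spells out what the paper leaves implicit.
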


\begin{proof}
Write~$L^{\prime}_n=P^{n}-P_{\sigma}^{n}$,
and assume that~$n=cp$ for some~$c\ge1$.
Then
\[
L^{\prime}_{cp}=(L^{\prime}_c)^p+
\sum_{i=1}^{(p-1)/2}(-1)^{i-1}{p\choose i}
(PP_{\sigma})^{ic}L^{\prime}_{(p-2i)c}.
\]
However~$p\divides{p\choose i}$ for~$1\leqslant i\leqslant \frac{p-1}{2}$,
so~$L^{\prime}_{cp}=(L^{\prime}_c)^p$,
and therefore~$L_{cp}=(L^{\prime}_1)^{p-1}L_c^p$.
\end{proof}

Thus, once again, terms whose index is divisible
by the characteristic must be removed in order to find
primitive divisors.

One more lemma is needed
before making the key
divisibility observation
for the sequences~$(L_n)_{n\ge1}$.

\begin{lem}\label{lem:keylem}
Assume that~$\cha(k)\neq2$.
Then~$\widehat{L}_m$ and~$L_m$ are coprime in~$k[T]$.
\end{lem}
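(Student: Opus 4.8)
The plan is to argue by contradiction, following the pattern of Lemma~\ref{lem:coprimelem}. Suppose $\widehat{L}_m$ and $L_m$ are not coprime in $k[T]$; since $k[T]$ is a UFD there is then an irreducible $Q\in k[T]$ dividing both. From the defining identity $L_m=(P^m-P_\sigma^m)/(P-P_\sigma)$ we have $P^m-P_\sigma^m=(P-P_\sigma)L_m$, so $Q$ divides $P^m-P_\sigma^m$ as well as $\widehat{L}_m=P^m+P_\sigma^m$.

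Next I would take the sum and the difference of these two multiples of $Q$, namely $2P^m=\widehat{L}_m+(P^m-P_\sigma^m)$ and $2P_\sigma^m=\widehat{L}_m-(P^m-P_\sigma^m)$. Because $\cha(k)\neq2$, the constant $2$ is a unit in $k[T]$, so $Q$ divides both $P^m$ and $P_\sigma^m$; as $Q$ is irreducible this forces $Q\divides P$ and $Q\divides P_\sigma$. Then $Q\divides PP_\sigma$, while $Q\divides P$ together with $Q\divides P_\sigma$ gives $Q\divides P+P_\sigma$. This contradicts the standing assumption that $P+P_\sigma$ and $PP_\sigma$ are coprime, so no such $Q$ exists and $\gcd(\widehat{L}_m,L_m)$ is a unit.

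The argument is essentially the same ``sum-and-difference, divide by the unit $2$'' device already used in Lemmas~\ref{lem:coprimelem} and~\ref{lem:sdlemma2}, and I do not expect a genuine obstacle. The only points needing a word of care are the passage from $Q\divides P^m$ to $Q\divides P$, which uses the irreducibility of $Q$ in the UFD $k[T]$, and the observation that the hypothesis $\cha(k)\neq2$ is exactly what legitimises the division by $2$ — the same reason even characteristic is treated separately elsewhere in the paper.
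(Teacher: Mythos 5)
Your argument is sound in outline and is essentially the paper's idea: use that $2$ is a unit to combine $\widehat{L}_m=P^m+P_\sigma^m$ with $P^m-P_\sigma^m=(P-P_\sigma)L_m$ and land on $PP_\sigma$, then appeal to the standing coprimality assumption. One step, however, is stated too loosely: from $Q\divides P^m$ you conclude $Q\divides P$ ``as $Q$ is irreducible in the UFD $k[T]$'' --- but this divisibility takes place in $k(\alpha)[T]$, where $P$ lives and where $Q$ need not remain irreducible. If $Q$ acquired a repeated irreducible factor over $k(\alpha)$, the implication $Q\divides P^m\Rightarrow Q\divides P$ could fail. It does not fail here, because $\cha(k)\neq2$ makes $k(\alpha)/k$ a separable extension, so an irreducible $Q\in k[T]$ stays squarefree in $k(\alpha)[T]$ and the descent goes through; but that justification is missing and is genuinely needed. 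The cleanest repair is to multiply rather than descend: from $Q\divides P^m$ and $Q\divides P_\sigma^m$ you get $Q\divides(PP_\sigma)^m$, which lies in $k[T]$, so irreducibility of $Q$ in $k[T]$ gives $Q\divides PP_\sigma$, contradicting Lemma~\ref{lem:coprimelem}. This is exactly what the paper does in one stroke via the identity $\widehat{L}_m^2-(L_1')^2L_m^2=4(PP_\sigma)^m$, which keeps the whole computation inside $k[T]$ and avoids the issue entirely.
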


\begin{proof}
Clearly
\[
\widehat{L}_m^2-(L^{\prime}_m)^2=4(PP_{\sigma})^m,
\]
so
\[
\widehat{L}_m^2-(L^{\prime}_1)^2L_m^2=4(PP_{\sigma})^m.
\]
By assumption,~$4$ is a unit in~$k[T]$, so any prime~$\pi\in k[T]$
dividing~$\widehat{L}_m$ and~$L_m$
also divides~$PP_{\sigma}$,
completing the proof by Lemma~\ref{lem:coprimelem}.
\end{proof}

\begin{lem}\label{lem:piadicord2}
Let~$L_n$ be as defined above. If~$\pi\in k[T]$
is a prime dividing~$L_n$, then for~$\cha(k)=p>0$
and~$m,n$ coprime to~$p$,
\[
\ord_{\pi}(L_{mn})=\ord_{\pi}(L_n),
\]
and for $\cha(k)=0$,
\[
\ord_{\pi}(L_{mn})=\ord_{\pi}(L_n).
\]
\end{lem}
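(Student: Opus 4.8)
The plan is to mimic the proof of Lemma~\ref{lem:piadicord}, expanding $P^{mn}$ binomially around $P_{\sigma}^{n}$ but keeping track of the denominators that appear in the Lucas sequence. Write $a=\ord_{\pi}(L_n)$ and note that since $\pi\divides L_n$ we may factor $L^{\prime}_n=P^n-P_{\sigma}^n=(P-P_{\sigma})L_n$, so $L^{\prime}_n=(P-P_{\sigma})\pi^{a}Q$ with $\pi\notdivides Q$; crucially, by Lemma~\ref{lem:coprimelem} together with the standing hypothesis that $P+P_{\sigma}$ and $PP_{\sigma}$ are coprime, $\pi$ divides neither $P-P_{\sigma}$ nor $P_{\sigma}$. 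Then from $P^{n}=P_{\sigma}^{n}+(P-P_{\sigma})\pi^{a}Q$ I raise to the $m$th power and expand, obtaining
\[
L^{\prime}_{mn}=P^{mn}-P_{\sigma}^{mn}=m(P-P_{\sigma})\pi^{a}Q\,P_{\sigma}^{n(m-1)}+\sum_{i=2}^{m}\binom{m}{i}(P-P_{\sigma})^{i}\pi^{ai}Q^{i}P_{\sigma}^{n(m-i)}.
\]
Dividing through by $P-P_{\sigma}$ gives
\[
L_{mn}=m\pi^{a}Q\,P_{\sigma}^{n(m-1)}+\sum_{i=2}^{m}\binom{m}{i}(P-P_{\sigma})^{i-1}\pi^{ai}Q^{i}P_{\sigma}^{n(m-i)},
\]
and since $m$ is coprime to $p$ (so $m$ is a unit in $k[T]$) while $\pi\notdivides Q$ and $\pi\notdivides P_{\sigma}$, the first term has $\pi$-adic order exactly $a$ and every later term has order at least $2a>a$. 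Hence $\ord_{\pi}(L_{mn})=a=\ord_{\pi}(L_n)$, which is exactly the claim in both the positive-characteristic case (with $p\notdivides m$) and the characteristic-zero case (where $m$ is automatically a unit and no condition on $m$ is needed).

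The one subtlety I expect to need care with is the claim $\pi\notdivides P-P_{\sigma}$: this does not follow from Lemma~\ref{lem:coprimelem} directly (which only controls $PP_{\sigma}$), but from the observation that a prime dividing both $L_n$ and $P-P_{\sigma}$ would, via $L^{\prime}_n=(P-P_{\sigma})L_n$ and the identity $(P-P_{\sigma})^2=(P+P_{\sigma})^2-4PP_{\sigma}$, divide $4PP_{\sigma}$ and hence — as $4$ is a unit in odd characteristic — divide $PP_{\sigma}$, contradicting Lemma~\ref{lem:coprimelem}; alternatively one notes $\widehat L_1=P+P_{\sigma}$ and uses $(P-P_{\sigma})(P+P_{\sigma})=\widehat L_1 \cdot(P-P_{\sigma})$ together with the coprimality of $\widehat L_m$ and $L_m$ from Lemma~\ref{lem:keylem}. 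Similarly $\pi\notdivides P_{\sigma}$ follows from Lemma~\ref{lem:coprimelem}. These are genuinely the only obstacles; everything else is the same binomial bookkeeping as in Lemma~\ref{lem:piadicord}, only with the extra factor of $P-P_{\sigma}$ in the denominator, which is harmless once we know $\pi$ avoids it. I would present the two cases $\cha(k)=p>0$ with $p\notdivides m$ and $\cha(k)=0$ uniformly, since the argument above never uses positivity of the characteristic beyond the hypothesis that $m$ be invertible.
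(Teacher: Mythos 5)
Your argument is correct, but it takes a genuinely different route from the paper. The paper splits on the parity of $m$: for odd $m$ it runs an induction on the symmetric-function identity $L_{mn}=(P-P_\sigma)^{m-1}L_n^m+\sum_{i=1}^{(m-1)/2}(-1)^i\binom{m}{i}(PP_\sigma)^{in}L_{(m-2i)n}$ together with the ultrametric inequality, and for even $m$ it reduces to $m=2$ via $L_{2m}=\widehat{L}_mL_m$ and the coprimality of $\widehat{L}_m$ and $L_m$ from Lemma~\ref{lem:keylem}. You instead transplant the binomial expansion of Lemma~\ref{lem:piadicord} wholesale, which gives a uniform one-step proof with no parity split, no induction, and no appeal to Lemma~\ref{lem:keylem} at all; that is a genuine simplification. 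Two points to tidy up. First, your expansion lives in $k(\alpha)[T]$: the individual terms $P_\sigma^{n(m-i)}(P-P_\sigma)^{i-1}$ are not in $k[T]$, whereas $L_{mn}$, $L_n$ and $Q$ are, so $\ord_\pi$ does not directly apply term by term. The fix is routine --- pass to a prime $\varpi$ of $k(\alpha)[T]$ above $\pi$, note that $\ord_\varpi=e\cdot\ord_\pi$ on $k[T]$, and check $\ord_\varpi(Q)=\ord_\varpi(P_\sigma)=0$ using Lemma~\ref{lem:coprimelem} --- but it should be said; the paper's identity is deliberately arranged so that every term (including $(P-P_\sigma)^{m-1}$ for odd $m$, a power of $(P-P_\sigma)^2\in k[T]$) stays in $k[T]$. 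Second, your worry about $\pi\nmid P-P_\sigma$ is a red herring: in your final display the factor $P-P_\sigma$ appears only with nonnegative exponents in the terms with $i\geqslant 2$, which are already killed by $\pi^{2a}$, so the fact is never needed; moreover your first proposed justification of it is not valid as stated, since $\pi\mid(P-P_\sigma)^2=(P+P_\sigma)^2-4PP_\sigma$ yields $\pi\mid 4PP_\sigma$ only if one also knows $\pi\mid P+P_\sigma$. Simply delete that discussion, keep $\pi\nmid P_\sigma$ (which does follow from Lemma~\ref{lem:coprimelem}), and the proof is complete and, if anything, cleaner than the original.
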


\begin{proof}
For~$m$ odd, this proceeds as in the proof of Lemma~\ref{lem:charp}.
The result is clearly true for~$m=1$.
So now suppose that
\[
\ord_{\pi}(L_{bn})=\ord_{\pi}(L_n)
\]
for each odd integer~$b<m$.
Then we note that
\[
L_{mn}=(P-P_\sigma)^{m-1}L_n^m+\sum_{i=1}^{{(m-1)}/{2}}(-1)^i{m\choose i}
(PP_{\sigma})^{in}L_{(m-2i)n}.
\]
Not all terms inside the summation are zero, since~$m$ is
coprime to~$p$,
so by the inductive assumption we conclude the
statement of the lemma
by the ultrametric property of the
valuation~$\ord_{\pi}$.
For~$m$ even, note that it is sufficient to prove this
for~$m=2$. However, since
\[
L_{2m}=\frac{P^{2m}-P_{\sigma}^{2m}}{P-P_{\sigma}}
=\frac{P^{m}-P_{\sigma}^m}{P-P_{\sigma}}\cdot (P^m+P_{\sigma}^m),
\]
we see that
\[
L_{2m}=\widehat{L}_mL_m.
\]
By Lemma~\ref{lem:keylem},~$\hat{L}_m,L_m$ are
coprime in~$k[T]$, and so
\[
\ord_{\pi}(L_{2m})=\ord_{\pi}(L_m).
\]
\end{proof}

As before, we are now ready for our Zsigmondy theorem.

\begin{thm}\label{thm:fnfieldZsig3}
Suppose~$\cha(k)=p>2$, and let~$Q$ be the sequence obtained
from~$(L_n)_{n\geqslant 1}$ by deleting the terms
with~$p\divides n$. Then each term of~$Q$ beyond the second has a
primitive prime divisor. If~$\cha(k)=0$, then the
sequence~$(L_n)_{n\geqslant 1}$ has the property that all terms beyond
the second have a primitive prime divisor.
\end{thm}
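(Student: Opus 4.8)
The plan is to follow the same template used in the proof of Theorem~\ref{thm:fnfieldZsig}, replacing~$f_n$ by~$L_n$ throughout, and using the analogues of the auxiliary results now available for the Lucas sequence. First I would record the factorization
\[
L_n=\prod_{d\mid n}\Psi_d(P,P_{\sigma}),
\]
where~$\Psi_d$ is the appropriate homogeneous cyclotomic-type polynomial obtained from~$\Phi_d$ (so that~$\prod_{d\mid n}\Psi_d(P,P_\sigma)=(P^n-P_\sigma^n)/(P-P_\sigma)$), and then apply M\"obius inversion to get
\[
\Psi_n(P,P_{\sigma})=\prod_{d\mid n}L_d^{\mu(n/d)},
\]
hence~$\ord_{\pi}(\Psi_n(P,P_\sigma))=\sum_{d\mid n}\mu(n/d)\ord_\pi(L_d)$ for every prime~$\pi\in k[T]$.

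Next I would take a prime~$\pi$ dividing~$L_n$ that is not primitive, pick~$m<n$ minimal with~$\pi\mid L_m$, and use Lemma~\ref{lem:sdlemma2} (strong divisibility) to conclude~$m\mid n$ and, by the minimality argument verbatim from the proof of Theorem~\ref{thm:fnfieldZsig}, that~$\ord_\pi(L_c)=0$ unless~$m\mid c$. The one genuinely different ingredient is controlling~$\ord_\pi(L_{dm})$ for~$d\mid (n/m)$: write~$d=p^{e}b$ with~$p\nmid b$ (and~$e=0$ in characteristic zero); Lemma~\ref{lem:piadicord2} gives~$\ord_\pi(L_{bm})=\ord_\pi(L_m)$, and then the powers-of-$p$ contribution is handled by the computation in Lemma~\ref{lem:charp}, which shows~$L_{cp}=(P-P_\sigma)^{p-1}L_c^{p}$, so that passing from index~$c$ to index~$cp$ merely multiplies~$\ord_\pi$ by~$p$ (note~$\pi\nmid(P-P_\sigma)$ since~$\pi\mid PP_\sigma$ would contradict Lemma~\ref{lem:coprimelem}, and~$P-P_\sigma$ shares its prime divisors with~$(P-P_\sigma)^2=(P+P_\sigma)^2-4PP_\sigma$). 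Since~$n/m>1$ and the map~$d\mapsto\ord_\pi(L_{dm})$ depends only on the~$p$-part of~$d$ together with a common factor~$\ord_\pi(L_m)$, the alternating sum~$\sum_{d\mid(n/m)}\mu(n/dm)\ord_\pi(L_{dm})$ telescopes to~$0$ exactly as before; in characteristic zero it collapses immediately because~$\ord_\pi(L_{dm})=\ord_\pi(L_m)$ for all~$d$.

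This shows every non-primitive prime divisor of~$L_n$ avoids~$\Psi_n(P,P_\sigma)$, so it remains to check that~$\Psi_n(P,P_\sigma)$ is a non-unit for~$n>2$. Here I would reuse the observation preceding~\eqref{addedeqnA}: if~$\deg P\neq\deg P_\sigma$ the factor is automatically non-constant, and if they share degree~$d$ with leading coefficients~$a_d,\sigma(a_d)$ then a primitive~$n$th root of unity~$\zeta$ over~$k$ with~$\phi(n)\ge2$ forces two distinct relations~$a_d=\zeta^{i_1}\sigma(a_d)$ and~$a_d=\zeta^{i_2}\sigma(a_d)$, impossible since~$a_d\neq0$. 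Therefore~$\Psi_n(P,P_\sigma)$ has a prime divisor in~$k[T]$, necessarily primitive for~$L_n$, and every term of~$Q$ beyond the second (resp.\ every term of~$(L_n)$ beyond the second in characteristic zero) has a primitive prime divisor.

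I expect the main obstacle to be purely bookkeeping: verifying that the~$p$-power scaling from Lemma~\ref{lem:charp} interacts correctly with the M\"obius sum so that the cancellation is complete, and confirming that the cyclotomic-type factorization of~$L_n$ behaves well enough (in particular that~$\Psi_d(P,P_\sigma)\in k[T]$ and is~$\sigma$-invariant) to legitimize the M\"obius inversion step. Neither of these is deep, but both need to be stated cleanly; once they are in place the argument is word-for-word that of Theorem~\ref{thm:fnfieldZsig}.
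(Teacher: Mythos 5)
Your proposal is correct and follows essentially the same route as the paper: the cyclotomic factorization $L_n=\prod_{d\mid n,\,d>1}\Phi_d(P,P_\sigma)$, M\"obius inversion, strong divisibility (Lemma~\ref{lem:sdlemma2}) plus Lemma~\ref{lem:piadicord2} to collapse the M\"obius sum, and the observation~\eqref{addedeqnA} to see the cyclotomic factor is a non-unit. The only superfluous part is your handling of the $p$-power contribution via Lemma~\ref{lem:charp}: since the theorem already deletes indices with $p\divides n$, every divisor $d$ of $n/m$ is automatically coprime to $p$ and Lemma~\ref{lem:piadicord2} applies directly, so that digression (and the slightly shaky claim that $\pi\notdivides P-P_\sigma$) is never needed.
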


\begin{proof}
We begin by noting the fact that
\begin{equation}
L_n=
\prod_{\genfrac{}{}{0pt}{}{d\vert n,}{d>1}}
\Phi_d(P,P_{\sigma}),\nonumber
\end{equation}
where~$\Phi_d$ is the~$d$th homogeneous cyclotomic polynomial.
By M\"{o}bius inversion,
\[
\Phi_n(P,P_{\sigma})=\prod_{\genfrac{}{}{0pt}{}{d\vert n,}{d>1}}L_d^{\mu(n/d)}
=\prod_{d\vert n}L_d^{\mu(n/d)}.
\]
The rest of the proof proceeds along the same lines as the
proof of Theorem~\ref{thm:fnfieldZsig}, combining
Lemmas~\ref{lem:sdlemma2} and~\ref{lem:piadicord2}
with~\eqref{addedeqnA}.

\end{proof}


\providecommand{\bysame}{\leavevmode\hbox to3em{\hrulefill}\thinspace}

\end{document}